\theoremstyle{definition}
\newtheorem{definition}{Definition}[section]
\theoremstyle{plain}
\newtheorem{Proposition}[definition]{Proposition}
\newtheorem{Corollary}[definition]{Corollary}
\newtheorem{Definition}[definition]{Definition}
\newtheorem{Examples}[definition]{Examples}
\theoremstyle{remark}
\newtheorem{remark}[definition]{Remark}
\newcommand{\R}{\mathbb R}  
\newcommand{\N}{\mathbb N} 
\newcommand{\C}{\mathbb C} 
\newcommand{\Z}{\mathbb Z}
\newcommand{\eps}{\varepsilon}
\newcommand{\vphi}{\varphi}
\newcommand{\D}{\mathcal{D}}
\newcommand{\cinfty}{C^\infty}
\newcommand{\oc}{\ensuremath{\mathcal{O}_C} }
\newcommand{\om}{\ensuremath{\mathcal{O}_M} }
\newcommand{\ilim}[1]{\displaystyle{\lim_{#1\rightarrow}}}
\newcommand{\plim}[1]{\displaystyle{\lim_{#1\leftarrow}}}
\newcommand{\hato}{\widehat\otimes}
\newcommand{\Sch}{\mathcal{S}}
\newcommand{\Linfty}{\mathrm{L}^\infty}
\newcommand{\B}{\mathcal{B}}
\newcommand{\Lp}{\mathrm{L}^p}
\newcommand{\Lq}{\mathrm{L}^q}
\newcommand{\Lo}{\mathrm{L}^1}
\newcommand{\Lt}{\mathrm{L}^2}
\newcommand{\pa}{\partial}
\newcommand{\cont}{\mathcal{C}}
\newcommand{\dlpmk}{(\D_{\Lp})_{-k}}
\newcommand{\dlp}{\D_{\Lp}}
\newcommand{\dlq}{\D_{\Lq}}
\newcommand{\dlo}{\D_{\Lo}}
\newcommand{\dli}{\D_{\Linfty}}
\newcommand{\lpmk}{(\ell^p)_{-k}}
\newcommand{\Four}{\mathcal{F}}
\newcommand{\mo}{^{-1}}
\newcommand{\limk}{(\ell^\infty)_{-k}}
\title{On the completeness of the space \oc}
\author{Michael Kunzinger\footnote{University of Vienna, Faculty of Mathematics, michael.kunzinger@univie.ac.at},
Norbert Ortner\footnote{University of Innsbruck, mathematik1@uibk.ac.at},
}
\begin{document}

\date{}


\maketitle

\begin{abstract}
We explicitly prove the compact regularity of the $\mathcal{LF}$-space of double sequences $\ilim{k} (s\hato (\ell^p)_{k}) \cong \ilim{k}(s\hato (c_0)_{-k})$, $1\le p\le \infty$. As a consequence we obtain that the spaces of slowly and of uniformly slowly increasing $\cinfty$-functions $\om$ and $\oc$, respectively, are ultrabornological and complete.
Furthermore, we prove that $\ilim{k}(E_k\hato_\iota F) = (\ilim{k} E_k) \hato_\iota F$ if the inductive limit $\ilim{k}(E_k \hato_\iota F)$ is compactly regular.

\vskip 1em

\noindent
\emph{Keywords: Function spaces, distribution spaces, sequence spaces, Fr\'echet and Montel spaces, inductive limits, regularity,
compact regularity, (completed) topological tensor products} 
\medskip

\noindent 
\emph{MSC2020: 46A04, 46A08, 46A13, 46F05, 46F10} 
\medskip

\noindent 
\emph{Data Availability Statement:} Data sharing not applicable to this article as no datasets were generated or analysed during the current study.
\medskip

\noindent 
\emph{Conflict of Interest Statement:} There is no conflict of interest.
\end{abstract}

\section{Introduction}\label{sec:intro}
\subsection{Historical development. Properties of the spaces \texorpdfstring{$\oc, \om, \oc', \om'$}{OC,OM,OC',OM'}}
The spaces $\Sch = \Sch(\R^n)$ and $\Sch'=\Sch'(\R^n)$ of rapidly decreasing $\cinfty$-functions and of temperate distributions, respectively,
play a central role in L.\ Schwartz' theory of the Fourier transform. Temperate distributions on $\R^n$ can be continued to $\mathbb{S}^{n-1}$, which 
supplies the reason for calling them ``spherical distributions''. L.\ Schwartz also introduced the spaces $\oc'$ and $\om$ of convolutors and
multipliers, respectively, for the spaces $\Sch$ and $\Sch'$. For the notation used here and below, cf.\ Subsection \ref{subsec:notation}. 

The space $\oc'$ of rapidly decreasing distributions is defined as (cf.\ \cite{Schwartz}, p.\ 244)
\[
\oc' = \bigcap_{k=0}^\infty (\D'_{\Linfty})_k = \plim{k} (\D'_{\Linfty})_k.
\]
We have
\begin{equation}\label{eq:ocprimelimit}
\oc' = \plim{k} (\dot \B')_k = \plim{k} (\D'_{\Lp})_k \qquad (1\le p \le \infty).
\end{equation}
To see this, note first that the continuous inclusions 
\[
\plim{k}(\D'_{\Lp})_k \hookrightarrow \plim{k} (\dot \B')_k \hookrightarrow
\plim{k}(\D'_{\Linfty})_k
\]
follow immediately from $\D'_{\Lp} \hookrightarrow \dot \B' \hookrightarrow \D'_{\Linfty}$.

Conversely, let $S\in \plim{k}(\D'_{\Linfty})_k$, so that $(1+|x|^2)^kS \in \D'_{\Linfty}$ for all $k$. Then we also have, for all $k,l$ that
\[
(1+|x|^2)^kS \in (1+|x|^2)^{-l}\D'_{\Linfty}.
\]
Choosing now $l$ in such a way that $(1+|x|^2)^{-l} \in \D_{L^p}$ (i.e., $l>\frac{n}{2p}$), then
\cite[Thm.\ XXVI, p.\ 203]{Schwartz} shows that, for each $k$, $(1+|x|^2)^kS\in \D'_{L^p}$, i.e., $S\in \plim{k}(\D'_{\Lp})_k$. Finally, continuity of  
$\plim{k}(\D'_{\Linfty})_k \hookrightarrow \plim{k}(\D'_{\Lp})_k$ follows by 
transposing the continuous (by the closed graph theorem) embedding
\[
\ilim{k} (\D_{L^q})_{-k} \hookrightarrow \ilim{k} (\D_{L^1})_{-k}, 
\quad \frac{1}{p} + \frac{1}{q} = 1.
\]

By taking into account that the spaces 
$\plim{k}(\D'_{L^p})_k$ and $\plim{k}(\dot{\mathcal{B}}')_k$ are reduced projective limits and 
that bounded subsets of $\oc'$ are weakly relatively compact, the application of \cite[Ch.\ IV, 4.4, p.\ 139]{Schaefer} yields that the strong dual $(\oc')_b'$ of $\oc'$ is the space of uniformly slowly increasing
$\cinfty$-functions, i.e., 
\begin{equation}\label{eq:oc_def}
\oc:= (\oc')'_b = \ilim{k} \dot\B_{-k} = \ilim{k} (\D_{\Lp})_{-k}. 
\end{equation}
This also uses $(\dot\B')_b' \cong \D_{L^1}$, which in turn follows, e.g., from the fact that $\dot\B' \cong c_0\hato s'$ implies (cf.\ \cite[Thm.\ 1, p.\ 766]{Val13a})
\[
(\dot\B')_b' \cong \ell^1\hato  s \cong \D_{L^1}.
\]
The term ``uniform'' refers to the fact that for a function in $\oc$ all derivatives have the same polynomial growth at infinity.
Note that the space $\oc$ is \emph{ultrabornological}, being an inductive limit of Fr\'echet spaces. Furthermore, $\oc$ is the 
non-strict inductive limit of the Fr\'echet spaces $\dot\B_{-k}$ (cf.\ \cite[p.\ 173]{Horv}). The space $\om$ consists of all slowly increasing $\cinfty$-functions, i.e. (\cite[p.\ 243]{Schwartz}),
\[
\om = \plim{m} (\dot{\mathcal{B}}^m)_{-\infty} = \plim{m} (W^{m,p})_{-\infty} \qquad (1\le p \le \infty).
\]
Due to the characterization of functions $f\in \om$ by
\[
\forall \alpha \in \N_0^n \ \forall \vphi \in \Sch \colon \vphi \partial^\alpha f \in \cont_0\,, 
\]
we see that $\om$ is a closed subspace of the (uncountable) projective limit of the Banach spaces $(\partial^\alpha)^{-1}(\frac{1}{\vphi}) \cont_0$, $\vphi\in \Sch_+$, $\alpha\in \N_0^\alpha$, hence is complete. 
Note that, e.g., $e^{ix^2}\in \om(\R)\setminus \oc(\R)$. 

The Fourier transform $\Four: \oc' \to \om$ and its transpose $\Four: \om' \to \oc$ are isomorphisms (\cite[Th.\ XV, p.\ 268]{Schwartz}). The 
strong dual $\om'$ of $\om$ is the space of very rapidly decreasing distributions: If $S\in \om' = \Four\mo(\oc)$ then $\Four S \in \oc = 
\ilim{k} (\D_{\Lt})_{-k}$ and $S\in \ilim{k}(1-\Delta_n)^k(\Lt)_\infty$, i.e., 
\[
\om' = \ilim{k} (1-\Delta_n)^k (\Lt)_\infty \cong \ilim{m}(W^{-2m,2})_\infty.
\]
$\om'$ is ultrabornological and $\oc'$ is complete. This leaves the question of whether the strong dual $(\oc)'_b$ of the space $\oc$
is isomorphic to $\oc'$, i.e., $(\oc)'_b \cong \oc'$, which amounts to proving the reflexivity of $\oc$.

More generally, L.\ Schwartz posed the question whether the spaces $\om$ and $\oc'$ have Properties of topological vector spaces analogous 
to those of $\D$ and $\D'$ (\cite[p.\ 245]{Schwartz}). By this, he means: Are the spaces $\oc', \om, \oc, \om'$ ultrabornological complete Montel spaces? 
A.\ Grothendieck answered L.\ Schwartz' question in \cite[II, Th.\ 16, p.\ 131]{Groth_tensor}, supplemented by the property of nuclearity of the four 
spaces mentioned above.

The most difficult question here seems to be that of the completeness of $\oc \cong \om'$: ``Il n'est pas trivial, par contre que $\om'$
soit complet, ce que nous allons pourtant montrer'' (\cite[II, p.\ 130]{Groth_tensor}). The completeness of $\oc$ is the completeness of the non-strict $\mathcal{LF}$-space $(\dot{\mathcal{B}})_{-\infty} = \ilim{k} (\dot{\mathcal{B}})_{-k}$.

A.\ Grothendieck's proof of the ultrabornologicity of the space $\om$ (and thereby the completeness of its dual $\om'$, isomorphic to $\oc$) 
consists of two steps:
\begin{itemize}
\item[(i)] $\om$ is isomorphic to a direct factor of 
\[
s\hato s' \cong \Sch \hato \Sch' \cong \mathcal{L}_b(\Sch,\Sch)\,;
\]
(\cite[II, Lem.\ 18, p.\ 132--134]{Groth_tensor})
\item[(ii)] $s\hato s'$ is ultrabornological.
\end{itemize}
The main point is (ii) (see \cite[II, Lem.\ 18, p.\ 125--128]{Groth_tensor}). 
The first assertion (i) led M.\ Valdivia to proving the isomorphism
\begin{equation}\label{eq:V}
\om \cong s\hato s'
\end{equation}
(\cite[Th.\ 3, p.\ 478]{Val13}). Later on, Ch.\ Bargetz showed that $\oc \cong s \hato_\iota s'$ by using the isomorphism \eqref{eq:V} and the 
ultrabornologicity of $\om$ (\cite[Prop.\ 1, p.\ 318]{Bargetz}). This isomorphism immediately implies the completeness of the space $\oc$. However we don't use Ch.\ Bargetz' isomorphism because its proof relies on the ultrabornologicity of $s'\hato s$.
\subsection{Objective}
The aim of our study is twofold: First, we give a proof of the compact regularity of the $\mathcal{LF}$ sequence space
\[
s \hato_\iota s' = \ilim{k} (s\hato (\ell^p)_{-k}) \cong \ilim{k} (s\hato (c_0)_{-k}) \qquad (1\le p\le \infty)
\]
(Proposition \ref{prop:compactly_regular}) as an illustration of Thm.\ 6.4 in \cite{Wengenroth14} and of Thm.\ 5.18 in \cite[p.\ 82]{vogt} (Section \ref{sec:4}).

Second, we derive the completeness of $\oc$ and the ultrabornologicity of $\om$ in Section \ref{sec:5}.

In Section \ref{sec:7} we prove that the strong dual of a quasinormable Fr\'echet space is a compactly regular
$\mathcal{LB}$-space. This is illustrated by several examples. Finally, the completeness result from Section \ref{sec:4}, i.e.,
\[
\ilim{k} (s\hato (\ell^\infty)_{-k}) = s \hato (\ilim{k} (\ell^\infty)_{-k}) = s\hato_\iota s' 
\]
is slightly generalized by Proposition \ref{Prop:new}. We also use the general results of A.\ Grothendieck on the strong duals of tensor products, as well as those 
on the permanence of properties in building completed tensor products. Also, L.\ Schwartz' theorem on the ultrabornologicity of strong duals of complete Schwartz spaces (\cite[I, p.\ 43]{Schwartz_vector}) is applied.

\subsection{Notation and conventions}\label{subsec:notation}
Our notation mostly follows \cite{Schwartz}, but for the reader's convenience we collect here all the basic definitions.
We denote by $\Delta_n:= \pa_1^2+\dots +\pa_n^2$ the Laplacean in $n$ variables, and by $Y$ the Heaviside function (\cite[p.\ 36]{Schwartz}).
$\cont_0$ is the space of continuous functions vanishing at infinity, while $\dot \B$ and $\D_{\Lp}$, $1\le p\le \infty$ are the complex-valued
$\cinfty$-functions defined on $\R^n$ all of whose derivatives belong to $\cont_0$ and $\Lp$, respectively (\cite[p.\ 199]{Schwartz}).
Subscripts $k$ refer to weights of the form $(1+|x|^2)^k$, e.g., $(\Lp)_{-k} = (1+|x|^2)^k \Lp$, $\dlpmk = (1+|x|^2)^k \dlp$,
or 
\[
\lpmk = j^k\ell^p  = \{ (x_j)_{j\in \N} \in \C^\N \colon \sum_{j\in \N} j^{-kp} |x_j|^p <\infty \},
\]
with the obvious modifications if $p=\infty$ or the index set is $\Z$ instead of $\N$.

The distribution spaces $\dlp' =(\dlq)'_b$, $1\le q<\infty$, $1/p+1/q=1$, $\dlo' = (\dot B)'_b$ are defined in \cite[p.\ 200]{Schwartz}. The dual of the Sobolev space $W^{2m, 2}$ (on $\R^n$) is the space $W^{-2m,2}:=(1-\Delta_n)^m\Lt$. The closure of the space $\mathcal{E}'$ of compactly supported
distributions in $\dli'$ is $\dot\B'$ (in particular, the latter is \emph{not} the dual space of $\dot \B$), see \cite[p.\ 200]{Schwartz}.

If $E$ is a locally convex Hausdorff topological vector space and if $E'$ is its dual, then $E'_b$ denotes $E'$ equipped with the strong
topology, i.e., the topology of uniform convergence on bounded subsets of $E$. By $s$ and $s'$ we denote the spaces of rapidly decreasing
and slowly increasing sequences, respectively. $\Sch$ and $\Sch'$ are the spaces of rapidly decreasing $\cinfty$-functions and of temperate 
distributions, respectively, as defined in \cite[p.\ 233--238]{Schwartz}. The Fourier transformation $\Sch \to \Sch$ and $\Sch'\to \Sch'$ is
uniformly denoted by $\mathcal{F}$.

A locally convex vector space $E$ is \emph{quasinormable} if the trace of the strong topology of $E_b'$ on each equicontinuous subset of $E'$
coincides with the topology of uniform convergence on a suitable neighborhood of $0$ of $E$ (\cite[4.\ Def., p.\ 98]{Bierstedt}, \cite[Def.\ 4, p.\ 106]{Groth}). A \emph{Schwartz space} is a quasinormable space whose bounded sets are precompact (\cite[Prop.\ 17, p.\ 116, Def.\ 5, p.\ 117]{Groth}). An inductive limit $\ilim{k} E_k$ is called \emph{regular} if each bounded subset $B$ of $\ilim{k} E_k$ is contained in
a step $E_k$ and is bounded there (\cite[p.\ 46]{Bierstedt}). $\ilim{k} E_k$ is called \emph{compactly regular} if each compact subset
$K$ of $\ilim{k} E_k$ is contained in a step $E_k$ and is compact there (\cite[p.\ 99/100]{Bierstedt}). Let $E$ and $F$ be locally convex spaces.
The \emph{inductive topology} on $E\otimes F$, denoted by $E\otimes_\iota F$, is the finest locally convex topology on $E\otimes F$ such that
the canonical map $E\times F \to E\otimes F$ is partially continuous (\cite[II, p.\ 13]{Schwartz_vector}, \cite[I, Def.\ 3, p.\ 74]{Groth_tensor}).
$E \hato_\iota F$ denotes the completion of $E\otimes_\iota F$. If one of the spaces $E$, $F$ is nuclear, then $E \hato F$ denotes the completed
tensor product of $E\otimes F$ with the $\eps$- or $\pi$-topology. For Fr\'echet and $\mathcal{DF}$-spaces $E, F$ we have $E\hato_\iota F = E \hato_\pi F$ (the latter denoting the 
projective tensor product). The $\beta$-topology on $E\otimes F$ is the finest locally convex topology on $E\otimes F$ such that the canonical map $E\times F \to E\otimes F$ is hypocontinuous (with respect to all bounded
subsets of $E$ and $F$), cf.\ \cite[II, p.\ 12]{Schwartz_vector}. $E\hato_\beta F$ denotes the completion of
$E\otimes_\beta F$. The index $\beta$ in $E\otimes_\beta F$ refers to the finest locally convex topology on $E\otimes F$ such that the canonical mapping $E\times F \to E\otimes F$ is hypocontinuous.

For $\mathcal{LF}$-spaces we adopt the notation from \cite[p.\ 44]{Bierstedt}. The index set is always $\Z$.

\section{Compact regularity and completeness of the \texorpdfstring{$\mathcal{LF}$-spaces}{LF-sp} \\ 
\texorpdfstring{$\mathbf{\ilim{k}(s\hato (\ell^p)_{-k}) 
\cong \ilim{k}(s\hato (c_0)_{-k}),\ 1\le p\le \infty}$}{limkslinfty}}\label{sec:4}

The compact regularity in question will be shown through an application of Theorem 2.7, (4)$\Leftrightarrow$(6) of \cite[p.\ 252]{Wengenroth14} via proving the equivalent property (Q) (a notation reminiscent of the concept of quasinormability) as defined in \cite[Prop.\ 2.3, p.\ 250]{Wengenroth14}: 
\begin{Definition}\label{def:property_Q}
Let $E = \ilim{k} E_k$ be an $\mathcal{LF}$-space (in particular, $E_k\hookrightarrow E_l \hookrightarrow E_m$ continuously whenever $k<l<m$). Then $E$ is said to have property (Q) if, for any $k\in \N$, there exists an
absolutely convex $0$-neighborhood $U_k$ in $E_k$ and an $l>k$ such that $E_l$ and $E_m$ induce the same topology on $U_k$
for all $m>l$.
\end{Definition}

\begin{Proposition}\label{prop:compactly_regular}
The inductive limits $\ilim{k} (s\hato (\ell^p)_{-k})$ and
$\ilim{k} (s\hato (c_0)_{-k})$, $1\le p\le \infty$
of the Fr\'echet spaces $s\hato (\ell^p)_{-k}$ and $s\hato (c_0)_{-k}$, respectively, are compactly regular and complete.
\end{Proposition}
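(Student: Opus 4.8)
The plan is to verify property (Q) from Definition \ref{def:property_Q} for the inductive limit $E = \ilim{k}(s \hato (\ell^p)_{-k})$, and then invoke Theorem 2.7, (4)$\Leftrightarrow$(6) of \cite{Wengenroth14} to conclude compact regularity; completeness then follows because a compactly regular $\mathcal{LF}$-space is complete (this is part of the standard characterizations, cf.\ \cite{Wengenroth14} Thm.\ 6.4 or \cite{vogt} Thm.\ 5.18). The case $p = \infty$ is equivalent to the $c_0$ case since $\dot{\B}' \cong c_0 \hato s'$ and the steps $s \hato (\ell^\infty)_{-k}$ and $s \hato (c_0)_{-k}$ carry the same inductive limit topology; and for $1 \le p < \infty$ the steps are again sandwiched between the $\ell^1$ and $\ell^\infty$ (equivalently $c_0$) steps up to a shift in $k$, exactly as in the computation \eqref{eq:ocprimelimit} of $\oc'$. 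So it suffices to treat one representative, say the $c_0$ case, and transfer.

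The heart is the explicit construction of the $0$-neighborhoods $U_k$ witnessing (Q). Fix $k$. A typical basic $0$-neighborhood in the Fr\'echet space $s \hato (c_0)_{-k} = s \hato (j^k c_0)$ can be described via the nuclearity of $s$: elements are double sequences $(x_{ij})$ with $\sup_{i,j} i^{-N} j^{k} |x_{ij}| < \infty$ for every $N$, and a fundamental system of seminorms is $q_N((x_{ij})) = \sup_{i,j} i^{N} j^{-k} |x_{ij}|$ — wait, one must be careful with which side the rapidly decreasing index sits; I would set up coordinates so that $s \hato (c_0)_{-k}$ consists of $(x_{ij})_{i \in \N, j \in \Z}$ with $p_{N,k}((x_{ij})) := \sup_{i,j} i^N |j|^{-k} |x_{ij}| < \infty$ for all $N$ (with $|0|^{-k}$ read appropriately, or index set $j \in \N$ as convenient). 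I would take $U_k$ to be the unit ball of the seminorm $p_{N_k, k}$ for a suitable $N_k$, or more robustly a weighted polar-type neighborhood, and then — following the quasinormability heuristic that the letter (Q) is meant to evoke — show that on $U_k$ the topology induced by $E_l$ stabilizes for $l = k+1$. Concretely: if $y^{(\nu)} \to 0$ in the $E_m$-topology restricted to $U_k$, the loss going from weight $|j|^{-k}$ to $|j|^{-l}$ with $l = k+1$ provides one extra power of $|j|$, and this single gained power, combined with the a priori bound $y^{(\nu)} \in U_k$, lets one dominate the $E_m$-seminorms by the $E_{k+1}$-seminorms uniformly in $m$; the key point is that the $c_0$ (or $\ell^\infty$) structure means "vanishing at infinity in $j$" is controlled by any single polynomial gain, so the tail is uniformly small and only finitely many $j$-coordinates matter, on which all the $E_m$ agree. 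This is essentially the sequence-space shadow of quasinormability of $\dot{\B}$.

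The step I expect to be the main obstacle is exactly this uniform-tail estimate: making precise that the topologies induced by $E_m$ on $U_k$ coincide for all $m > l$, uniformly in $m$. The subtlety is that $E_m = s \hato (c_0)_{-m}$ has a strictly weaker norm on the $j$-variable as $m$ grows, so a priori the induced topologies could keep getting coarser; one must use the constraint $x \in U_k$ (a bound in the \emph{strong}, small-$k$ norm) to pin the tail down once and for all. I would handle this by a two-regime argument — a "head" in $j$ (finitely many coordinates, where $s \hato \C^{\text{finite}}$ is nuclear Fréchet and all the $E_m$ literally induce the same topology) and a "tail" in $j$ (where membership in $U_k$ already forces the relevant seminorm values to be small, independently of $m$) — splitting the cutoff between head and tail depending on the target accuracy $\eps$ but \emph{not} on $m$. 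Once (Q) is established, the cited theorems of Wengenroth (and Vogt) immediately give compact regularity, and compact regularity of an $\mathcal{LF}$-space gives completeness, finishing the proof. Finally I would remark that the $\ell^p$ and $c_0$ cases are genuinely interchangeable here because the compact regularity is inherited along the continuous inclusions $s \hato (\ell^1)_{-k} \hookrightarrow s \hato (\ell^p)_{-k} \hookrightarrow s \hato (c_0)_{-(k-1)}$ of steps, which induce a topological isomorphism of the respective inductive limits.
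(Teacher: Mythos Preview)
Your overall strategy---verify property (Q) and invoke Wengenroth's Theorem~2.7---is exactly the paper's, and your seminorm setup $p_{N,k}((x_{ij})) = \sup_{i,j} i^N j^{-k}|x_{ij}|$ matches. However, the head--tail argument you sketch has a genuine gap.

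The problem is the interaction between the two indices. The $0$-neighborhood $U_k$ is the unit ball of a \emph{single} seminorm $p_{N_k,k}$, so membership in $U_k$ controls only the one power $N_k$ in the $i$-variable: $|x_{ij}^{(\nu)}| < i^{-N_k} j^{k}$. To show convergence in $E_l$ you must control $p_{l,s}$ for \emph{every} $s$, in particular for $s > N_k$. On your tail region $j > J$ the $U_k$-bound gives
\[
i^{s} j^{-l} |x_{ij}^{(\nu)}| \le i^{\,s - N_k}\, j^{\,k-l},
\]
which is unbounded in $i$ whenever $s > N_k$; so the tail is \emph{not} uniformly small, no matter how $J$ is chosen. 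Using the $E_m$-convergence on the tail instead also fails, since $i^{s} j^{-l}|x| = j^{\,m-l}\cdot i^{s} j^{-m}|x|$ and $j^{\,m-l}$ blows up for $j>J$. A two-dimensional head--tail split does not rescue this: the same $i$-blow-up persists in the corner $\{i>I,\ j>J\}$.

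The paper handles exactly this point by \emph{interpolating} multiplicatively between the two available bounds. With $r = N_k$, $\theta = \frac{m-l}{m-k} \in (0,1)$ and $t = \frac{s-\theta r}{1-\theta}$, one has $s = \theta r + (1-\theta)t$ and $-l = \theta(-k) + (1-\theta)(-m)$, hence
\[
i^{s} j^{-l}|x_{ij}^{(\nu)}| = \bigl(i^{r} j^{-k}|x_{ij}^{(\nu)}|\bigr)^{\theta}\bigl(i^{t} j^{-m}|x_{ij}^{(\nu)}|\bigr)^{1-\theta} \le p_{m,t}(f_\nu)^{1-\theta} \to 0.
\]
This is the missing idea: the $U_k$-bound alone cannot reach high $i$-powers, and the $E_m$-convergence alone cannot control the $j$-tail, but their geometric combination handles both simultaneously. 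Once you replace the head--tail step by this interpolation, the rest of your outline (including the reduction among the various $p$ and $c_0$, and the deduction of completeness) goes through as written.
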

\begin{proof} For the sake of simplicity, we restrict ourselves to the case of $p=\infty$. The other cases require $\ell^p$-norms instead of the $\ell^\infty$-norm.

Set $E_k := s\hato \limk$, so that
\[
E_k = \{(x_{ij})_{i,j} \in \C^{\N\times \N} \colon \forall r\in \N_0\ \sup_{i,j} i^r j^{-k} |x_{ij}| < \infty  \}
\]
and the topology of $E_k$ is induced by the seminorms
\[
p_{kr}((x_{ij})_{ij}) = \| (i^rj^{-k} x_{ij})_{i,j} \|_\infty.
\]
Condition (Q) for the three (continuously embedded) spaces $E_k \hookrightarrow E_l \hookrightarrow E_m$, $k< l < m$ means that there exists 
an absolutely convex neighborhood $U_k$ of $0$ in $E_k$ on which, for any such choice of $l, m$, $E_l$ and $E_m$ induce the same topology on $U_k$.
To see this it suffices to show that a sequence $(x_{ij}^\nu)$ in $U_k$ converging to $0$ in $E_m$ as $\nu\to \infty$ also does so in $E_l$.

The topologies of $E_l$ and $E_m$ are defined by the seminorms $p_{l,s}$, $s\in \N_0$ and $p_{m,t}$, $t\in \N_0$, respectively.
Note that $p_{k,r}$ is not defined on $E_l$, and $p_{l,s}$ is not defined on $E_m$.

Let $U_k := \{(x_{ij})_{i,j} \in \C^{\N\times \N} \mid p_{kr}((x_{ij})_{i,j})<1\}$ for an arbitrary $r\in \N$ and let $f_\nu = (x_{ij}^\nu)_{i,j\in \N}$ be a sequence in $U_k$ that converges to $0$ in $E_m$. Then we have:
\begin{align}
&\forall \nu,\, i,\, j\in \N \quad i^rj^{-k} |x^\nu_{ij}| < 1 \label{eq:x-bounded}\\
&\forall t\in \R^+ \quad \sup_{i,j} i^t j^{-m} |x^\nu_{ij}| \to 0 \quad (\nu\to \infty)
\label{eq:x-convergent}
\end{align}
Given any $s\in \N$ with $s>r$, we need to conclude from \eqref{eq:x-bounded}, \eqref{eq:x-convergent} that
$p_{ls}(f_\nu) \to 0$ as $\nu\to \infty$ (the requirement $s>r$ is no restriction since the seminorms in
$E_l$ are increasing in $s$). We will do this via interpolation. Let 
\[
\theta := \frac{m-l}{m-k} \in (0,1), \qquad t:= \frac{s-\theta r}{1-\theta} \in \R^+.
\]
Then $-l = \theta (-k) + (1-\theta)(-m)$ and $s = \theta r + (1-\theta) t$, so we obtain for any $i,j\in \N$:
\begin{align*}
i^sj^{-l}|x_{ij}^\nu| &= i^{\theta r + (1-\theta)t} j^{\theta (-k) + (1-\theta)(-m)} |x_{ij}^\nu|\\ 
&= (i^r j^{-k} |x_{ij}^\nu|)^\theta (i^tj^{-m}|x_{ij}^\nu|)^{1-\theta}
\stackrel{\eqref{eq:x-bounded}}{\le} (\sup_{i,j} i^t j^{-m} |x^\nu_{ij}|)^{1-\theta}.
\end{align*}
Together with \eqref{eq:x-convergent} this implies that $p_{ls}(f_\nu)\to 0$ as $\nu\to \infty$, establishing property (Q) and thereby compact regularity of $\ilim{k} (s\hato \limk)$ (using \cite[Th.\ 2.7, p.\ 252]{Wengenroth14}).

Finally, completeness of $\ilim{k} (s\hato \limk)$ follows from \cite[Cor.\ 2.8, p.\ 252]{Wengenroth14}.
\end{proof}
\begin{remark} Essentially the same proof can be given by applying Thm.\ 5.18 in \cite[p.\ 82]{vogt} to the $\mathcal{LF}$-space
\[
\ilim{k}(s\hato (\ell^\infty)_{-k}) = \{(x_{ij})_{i,j\in \N} \colon \exists k\in \N, \ \forall r\in \N\colon \sup_{i,j} i^r j^{-k} |x_{ij}| <\infty\}
\]
of double sequences.
\end{remark}
\begin{Corollary}\label{cor:2.4} For $1\le p \le \infty$,
\[
s\hato_\iota s' \cong \ilim{k}(s\hato (\ell^p)_{-k}) \cong \ilim{k}(s\hato (c_0)_{-k}).
\]
\end{Corollary}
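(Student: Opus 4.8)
The plan is to reduce everything to the single isomorphism $\ilim{k}\bigl(s\hato(\ell^\infty)_{-k}\bigr)\cong s\hato_\iota s'$. First note that, by functoriality of $\hato$ together with the standard cofinality of the weighted spectra (one has continuous inclusions such as $(\ell^1)_{-k}\hookrightarrow(\ell^p)_{-k}\hookrightarrow(\ell^\infty)_{-k}\hookrightarrow(\ell^1)_{-k-2}$ and $(\ell^\infty)_{-k}\hookrightarrow(c_0)_{-k-1}\hookrightarrow(\ell^\infty)_{-k-1}$, and Hölder gives the analogous embeddings between the $\ell^p$-weights), all the inductive limits occurring in the statement --- over varying $p\in[1,\infty]$ and over $(c_0)_{-k}$ --- are mutually cofinal, hence topologically isomorphic; the same cofinalities identify $s'_b$ with $\ilim{k}(\ell^\infty)_{-k}$. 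It therefore suffices to prove $\ilim{k}\bigl(s\hato(\ell^\infty)_{-k}\bigr)\cong s\hato_\iota s'$.

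For that I would use that the inductive tensor topology commutes with inductive limits: a bilinear map on $s\times\bigl(\ilim{k}(\ell^\infty)_{-k}\bigr)$ is separately continuous if and only if each of its restrictions to $s\times(\ell^\infty)_{-k}$ is (for the non-trivial direction, a given vector of $s'$ already lies in one step), so by the universal property of $\otimes_\iota$ one has
\[
s\otimes_\iota s' \;=\; \ilim{k}\bigl(s\otimes_\iota(\ell^\infty)_{-k}\bigr)
\]
algebraically and topologically. Since $s$ is nuclear and each $(\ell^\infty)_{-k}$ is a Banach space, the $\iota$-, $\pi$- and $\varepsilon$-topologies on $s\otimes(\ell^\infty)_{-k}$ coincide, with completion the Fr\'echet space $E_k=s\hato(\ell^\infty)_{-k}$ of Proposition \ref{prop:compactly_regular}. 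Via the canonical continuous injections, $s\otimes_\iota s'=\bigcup_k\bigl(s\otimes(\ell^\infty)_{-k}\bigr)$ then lies inside the $\mathcal{LF}$-space $\ilim{k}E_k$, and it is dense there: any element of $\ilim{k}E_k$ lies in some step $E_{k_0}$, where it is a limit of elements of the dense subspace $s\otimes(\ell^\infty)_{-k_0}$, and this convergence is inherited along $E_{k_0}\hookrightarrow\ilim{k}E_k$. Moreover $\ilim{k}E_k$ is complete by Proposition \ref{prop:compactly_regular}.

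It remains to check that $\ilim{k}E_k$ induces on the dense subspace $s\otimes_\iota s'=\ilim{k}\bigl(s\otimes_\iota(\ell^\infty)_{-k}\bigr)$ precisely its own (inductive-limit, i.e.\ $\iota$-) topology; granting this, $\ilim{k}E_k$ is a complete space containing $s\otimes_\iota s'$ as a dense topological subspace, hence coincides with the completion $s\hato_\iota s'$. \emph{This matching of topologies on the dense subspace is the step I expect to be the real obstacle.} One inclusion of topologies is automatic from continuity of the injections $s\otimes_\iota(\ell^\infty)_{-k}\hookrightarrow\ilim{k}E_k$; for the reverse one must show that every absolutely convex $\iota$-neighbourhood of $0$ in $s\otimes_\iota s'$ is the trace of a $0$-neighbourhood of $\ilim{k}E_k$, which is a local, neighbourhood-level version of the assertion that completion commutes with this inductive limit, and it is precisely here that compact regularity --- concretely property (Q) and the interpolation estimate already established in the proof of Proposition \ref{prop:compactly_regular} --- has to be invoked. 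A shortcut that avoids carrying this out by hand is to quote the general result (Proposition \ref{Prop:new}) that $\ilim{k}(E_k\hato_\iota F)=\bigl(\ilim{k}E_k\bigr)\hato_\iota F$ whenever $\ilim{k}(E_k\hato_\iota F)$ is compactly regular, applied with $E_k=(\ell^\infty)_{-k}$ and $F=s$, together with the trivial symmetry $s\hato_\iota s'=s'\hato_\iota s$. Either way, combining this with the cofinality observation of the first paragraph yields the full chain of isomorphisms in the statement.
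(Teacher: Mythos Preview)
Your proof is correct, and the ``shortcut'' you invoke at the end is exactly the paper's own argument: once Proposition~\ref{prop:compactly_regular} supplies completeness, the paper simply cites the interchange result \cite[Rem.\ 22, p.\ 76]{Bargetz_thesis} or \cite[I, Prop.\ 14, p.\ 76]{Groth_tensor} and is done. Your preliminary cofinality reduction to $p=\infty$ and the attempt to match the subspace topology on $s\otimes_\iota s'$ by hand are extra scaffolding---not wrong, but unnecessary---and note that what you call Proposition~\ref{Prop:new} (whose hypothesis is completeness; the compact-regularity version you describe is Proposition~\ref{prop:limit_interchange}) is itself proved in the paper by citing the very same external reference, so quoting it is neither a gain nor a circularity.
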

\begin{proof}
Apply \cite[Rem.\ 22, p.\ 76]{Bargetz_thesis} or \cite[I, Prop.\ 14, p.\ 76]{Groth_tensor}.
\end{proof}
More generally, we have the following generalization of \cite[Prop.\ 3, p.\ 320]{Bargetz}:
\begin{Proposition}
Let $E$ and $F_k$, $k\in \N$, be Fr\'echet spaces, $F_k'$ the strong dual of $F_k$. If the inductive limit $\ilim{k}(E\hato_\iota F_k')$ is complete, then 
\[
E\hato_\iota (\ilim{k} F_k') \cong \ilim{k}(E\hato_\iota F_k').
\]
\end{Proposition}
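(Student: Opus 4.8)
\emph{Plan.} The plan is to prove the isomorphism by constructing mutually inverse continuous linear maps between $E\hato_\iota(\ilim{k}F_k')$ and $\ilim{k}(E\hato_\iota F_k')$, rather than by the more delicate route of checking that the inductive limit $\ilim{k}(E\hato_\iota F_k')$ induces the inductive-limit topology on the dense subspace $\ilim{k}(E\otimes_\iota F_k')$. The one non-formal ingredient is the topological identity
\[
E\otimes_\iota\bigl(\ilim{k}F_k'\bigr)=\ilim{k}\bigl(E\otimes_\iota F_k'\bigr),
\]
i.e.\ that the inductive tensor product commutes with countable inductive limits; I would quote this from \cite[I, Prop.\ 14, p.\ 76]{Groth_tensor}, but it also drops out of the universal properties: both sides classify separately continuous bilinear maps out of $E\times(\ilim{k}F_k')$, once one notes that such a bilinear map is separately continuous precisely when all its restrictions to the $E\times F_k'$ are (fix $x\in E$ and use continuity on each step; fix $y$, which lies in some $F_{k_0}'$, and use separate continuity on that step).

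\emph{Carrying it out.} First I would record density: since $E\otimes F_k'$ is dense in its completion $E\hato_\iota F_k'$, the subspace $E\otimes(\ilim{k}F_k')=\bigcup_k(E\otimes F_k')$ is dense in $\ilim{k}(E\hato_\iota F_k')$ — any point of the latter sits in some step $E\hato_\iota F_{k_0}'$, where it is a limit of elements of $E\otimes F_{k_0}'$, and the inductive-limit topology is coarser there — and it is of course also dense in $E\hato_\iota(\ilim{k}F_k')$ by definition of the completion and the identity above. Next, the inclusion $\ilim{k}(E\otimes_\iota F_k')=E\otimes_\iota(\ilim{k}F_k')\hookrightarrow\ilim{k}(E\hato_\iota F_k')$ is continuous, and since its target is \emph{complete by hypothesis} it extends to a continuous $\Phi\colon E\hato_\iota(\ilim{k}F_k')\to\ilim{k}(E\hato_\iota F_k')$; this is the only point where the completeness assumption is used. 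Conversely, for each $k$ the composite $E\otimes_\iota F_k'\hookrightarrow E\otimes_\iota(\ilim{k}F_k')\hookrightarrow E\hato_\iota(\ilim{k}F_k')$ is continuous into a complete space (a completion), hence extends to a continuous $\psi_k\colon E\hato_\iota F_k'\to E\hato_\iota(\ilim{k}F_k')$; the $\psi_k$ agree on the dense subspaces $E\otimes F_k'$, so they are compatible with the linking maps and glue to a continuous $\Psi\colon\ilim{k}(E\hato_\iota F_k')\to E\hato_\iota(\ilim{k}F_k')$. Finally, $\Psi\circ\Phi$ and $\Phi\circ\Psi$ restrict to the identity on the dense subspace $\bigcup_k(E\otimes F_k')$ of either space, hence are the identity throughout, so $\Phi$ is a topological isomorphism.

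\emph{Where the difficulty sits.} I do not expect a genuine obstacle: the argument is essentially formal once the commutation identity of Step 1 is granted. The hypothesis that the $F_k$ are Fréchet with $F_k'$ their strong duals enters only to guarantee that the spectrum $(F_k')$ is reduced, so that $\ilim{k}F_k'$ is Hausdorff and each $E\otimes_\iota F_k'$ really embeds, while completeness of $\ilim{k}(E\hato_\iota F_k')$ enters only to make $\Phi$ well defined. The one point that would actually require work in a more naive approach — and the reason I would construct $\Psi$ explicitly — is that a priori $\ilim{k}(E\hato_\iota F_k')$ need not induce the inductive-limit topology on the dense tensor subspace $\ilim{k}(E\otimes_\iota F_k')$; the mutual-inverse construction sidesteps any appeal to regularity of this inductive limit. (Specialising to $E=s$, $F_k'=(\ell^p)_{-k}$ and invoking Proposition~\ref{prop:compactly_regular} for completeness recovers Corollary~\ref{cor:2.4}.)
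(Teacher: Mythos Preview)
Your argument is correct and rests on the same key ingredient the paper invokes: Grothendieck's commutation $E\otimes_\iota(\ilim{k}F_k')=\ilim{k}(E\otimes_\iota F_k')$ from \cite[I, Prop.\ 14, p.\ 76]{Groth_tensor}. The paper's own proof is literally a one-line citation of that result together with \cite[Rem.\ 22, p.\ 76]{Bargetz_thesis}, treating the passage from uncompleted to completed tensor products (under the completeness hypothesis) as already contained in the references; you instead spell that passage out via the mutual-inverse maps $\Phi$ and $\Psi$, which cleanly sidesteps the need to verify that $\ilim{k}(E\hato_\iota F_k')$ induces the inductive-limit topology on the dense subspace $\ilim{k}(E\otimes_\iota F_k')$. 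So the two approaches coincide in substance; yours is simply a more self-contained unpacking of what the paper cites. One small inaccuracy in your closing remarks: the Fr\'echet hypothesis on the $F_k$ does not by itself guarantee that the spectrum $(F_k')_k$ is reduced or that $\ilim{k}F_k'$ is Hausdorff---that depends on the (tacitly assumed injective) linking maps---but this does not affect the proof.
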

\begin{proof}
Identical to the proof of Corollary \ref{cor:2.4}. 
\end{proof}
\section{Completeness of the space \texorpdfstring{$\mathbf{\oc}$}{OC} and ultrabornologicity of  \texorpdfstring{$\om$}{OM}}\label{sec:5} 
\begin{Proposition}\label{prop:3.1} (\cite[II, Th.\ 16, p.\ 131]{Groth_tensor}, \cite[Prop.\ 2]{Larcher_Wengenroth}, \cite[Ex.\ 6.7, p.\ 857]{Debrouwere_Vindas} ) 
The space $\om$ of slowly increasing functions is ultrabornological.
\end{Proposition}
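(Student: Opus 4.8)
The plan is to assemble three facts that are now at our disposal. By M.\ Valdivia's isomorphism \eqref{eq:V} we have $\om\cong s\hato s'$; since $s$ is Fr\'echet and nuclear and $s'$ is a $\mathcal{DF}$-space, the completed $\iota$-, $\pi$- and $\eps$-tensor products of $s$ and $s'$ all coincide, so $s\hato s' = s\hato_\iota s'$; and by Corollary~\ref{cor:2.4} the latter is topologically isomorphic to the countable inductive limit $\ilim{k}(s\hato(\ell^p)_{-k})$. Chaining these isomorphisms gives
\[
\om \;\cong\; s\hato s' \;=\; s\hato_\iota s' \;\cong\; \ilim{k}\bigl(s\hato(\ell^p)_{-k}\bigr),
\]
so it suffices to show that the space on the right is ultrabornological.

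For that I would invoke the standard permanence properties. Each step $s\hato(\ell^p)_{-k}$ is the completed tensor product of the Fr\'echet nuclear space $s$ with the Banach space $(\ell^p)_{-k}\cong\ell^p$, hence is itself a Fr\'echet space, in particular ultrabornological; and a countable inductive limit of ultrabornological spaces is again ultrabornological, exactly as was already noted in the Introduction for $\oc$. Thus $\ilim{k}(s\hato(\ell^p)_{-k})$, an $\mathcal{LF}$-space in the sense used in this paper, is ultrabornological, and therefore so is $\om$. Observe that this last step uses \emph{only} that we are dealing with a countable inductive limit of Fr\'echet spaces; the work of Section~\ref{sec:4} enters the argument solely through Corollary~\ref{cor:2.4}, where completeness of $\ilim{k}(s\hato(\ell^p)_{-k})$ (Proposition~\ref{prop:compactly_regular}) is what licenses interchanging $\hato_\iota$ with the inductive limit.

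The point that needs care is logical rather than computational: the chain of isomorphisms must not be circular. One has to invoke \eqref{eq:V} in a form whose proof does not itself rely on the ultrabornologicity of $\om$ — which is exactly why we route through Valdivia's isomorphism $\om\cong s\hato s'$ from \cite[Th.\ 3, p.\ 478]{Val13} rather than through Ch.\ Bargetz' isomorphism $\oc\cong s\hato_\iota s'$, whose proof does use the ultrabornologicity of $s'\hato s$. Granting this, the argument is complete. In principle one could avoid Valdivia's theorem and try to represent $\om$ directly as a countable inductive limit of Fr\'echet spaces by an explicit double-sequence model, but that construction is essentially the content of \eqref{eq:V}, so passing through \eqref{eq:V} is the economical choice.
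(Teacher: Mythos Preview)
There is a genuine gap: the identity $s\hato s' = s\hato_\iota s'$ that you invoke is false. Nuclearity of $s$ gives $s\hato_\pi s' = s\hato_\eps s'$, but it does \emph{not} force the $\iota$- and $\pi$-topologies to agree; the remark in Subsection~\ref{subsec:notation} that $E\hato_\iota F = E\hato_\pi F$ applies when $E,F$ are both Fr\'echet or both $\mathcal{DF}$, not in the mixed Fr\'echet/$\mathcal{DF}$ situation you have here. In fact your chain, if correct, would give $\om \cong s\hato_\iota s' \cong \ilim{k}(s\hato (c_0)_{-k}) \cong \ilim{k}\dot\B_{-k} = \oc$, contradicting $e^{ix^2}\in\om\setminus\oc$. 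So the very step meant to connect Valdivia's theorem to Corollary~\ref{cor:2.4} collapses the distinction between $\om$ and $\oc$.

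The paper's proof avoids this by working on the dual side. It first identifies the \emph{strong dual} of $s\hato_\iota s'$: using Corollary~\ref{cor:2.4}, the duality of regular inductive limits, Grothendieck's duality theorem for tensor products, and the interchange of projective limits with $\hato_\pi$, one gets $(s\hato_\iota s')'_b \cong \plim{k}(s'\hato(\ell^p)_k) \cong s'\hato s$. Since $s\hato_\iota s'$ is a complete Schwartz space (Proposition~\ref{prop:compactly_regular} plus Grothendieck's permanence result), L.~Schwartz' theorem makes its strong dual $s'\hato s$ ultrabornological, and then Valdivia's isomorphism $\om\cong s'\hato s$ finishes. The point is that $\om$ appears as the \emph{dual} of the $\mathcal{LF}$-space $s\hato_\iota s'$, not as that space itself.
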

\begin{proof}
(a) Corollary \ref{cor:2.4} and 1.\ Prop.\ in \cite[p.\ 57]{Bierstedt} imply that
\[
(s\hato_\iota s')'_b \cong (\ilim{k} (s\hato (\ell^q)_{-k}))'_b \cong \plim{k} (s\hato (\ell^q)_{-k})'_b.
\]
Furthermore, by Thm.\ 12 in \cite[II, p.\ 76]{Groth_tensor} we have:
\[
(s\hato (\ell^q)_{-k})'_b \cong s' \hato (\ell^p)_{k}
\]
Consequently, $(s\hato_\iota s')'_b \cong \plim{k} (s'\hato (\ell^p)_k)$, and the limit is interchangeable with the completed $\pi$-tensor product by \cite[p.\ 390]{Jarchow}. Therefore, we obtain $(s\hato_\iota s')'_b \cong s'\hato s$.

(b) The space $s\hato_\iota s'$ is a complete Schwartz space by Thm.\ 9-4 in \cite[II, p.\ 47]{Groth_tensor}. Therefore
by (a) and by L.\ Schwartz' Thm.\ on the ultrabornologicity of the strong dual of a complete Schwartz space \cite[I, p.\ 43]{Schwartz_vector} we conclude that $s'\hato s$ is ultrabornological.

(c) Due to M.\ Valdivia's isomorphism \cite[Thm.\ 3, p.\ 478]{Val13} $s'\hato s \cong \om$
we obtain from (b) the ultrabornologicity of $\om$.
\end{proof}

\begin{Proposition} (\cite[II, Th.\ 16, p.\ 131]{Groth_tensor}) 
The space $\oc$ of uniformly slowly increasing functions is complete.
\end{Proposition}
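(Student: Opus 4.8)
The plan is to deduce completeness of $\oc$ directly from the isomorphism established earlier in the excerpt, namely Corollary \ref{cor:2.4}, together with the completeness statement already proved in Proposition \ref{prop:compactly_regular}. Recall from \eqref{eq:oc_def} that $\oc = \ilim{k}(\D_{\Lp})_{-k}$, and that the space $\D_{\Lp}$ is nuclearly related to $s$ and $\ell^p$ in the standard Schwartz fashion. The key identification we want is
\[
\oc \cong s \hato_\iota s' \cong \ilim{k}(s\hato(\ell^p)_{-k}),
\]
so that completeness of $\oc$ is literally the completeness asserted in Proposition \ref{prop:compactly_regular} (for $p=\infty$, or any $1\le p\le\infty$).

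First I would recall the classical sequence-space representations of the building blocks: $s'$ is, by definition, the strong dual of $s$, and the $\mathcal{LF}$-representation $s\hato_\iota s' = \ilim{k}(s\hato(\ell^p)_{-k})$ is exactly Corollary \ref{cor:2.4}. Next I would invoke the known topological isomorphism relating $\oc$ to $s\hato_\iota s'$. There are two routes here: one can cite M.\ Valdivia's isomorphism $\om\cong s'\hato s$ (\cite[Thm.\ 3, p.\ 478]{Val13}) together with the Fourier isomorphism $\Four\colon \om'\to\oc$ from \cite[Th.\ XV, p.\ 268]{Schwartz}, which gives $\oc\cong(s'\hato s)'_b$; but this reintroduces a dual and is not what we want directly. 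The cleaner route, and the one consistent with the stated aim of the paper ("we don't use Ch.\ Bargetz' isomorphism because its proof relies on the ultrabornologicity of $s'\hato s$"), is to observe that the chain of identifications $\oc = \ilim{k}(\D_{\Lp})_{-k} \cong \ilim{k}(s\hato(\ell^p)_{-k})$ follows termwise from the nuclear isomorphism $\D_{\Lp}\cong s\hato\ell^p$ (respectively $\dot\B\cong s\hato c_0$), which is classical (cf.\ the representation $\dot\B'\cong c_0\hato s'$ already used in the introduction to derive \eqref{eq:oc_def}), together with the fact that the weight $(1+|x|^2)^k$ on the $\D_{\Lp}$ side corresponds to the weight $j^k$ on the $\ell^p$ side under this isomorphism. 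Passing to the inductive limit over $k$ then yields $\oc \cong \ilim{k}(s\hato(\ell^p)_{-k})$ as $\mathcal{LF}$-spaces.

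Having set up this identification, the conclusion is immediate: Proposition \ref{prop:compactly_regular} asserts that $\ilim{k}(s\hato(\ell^p)_{-k})$ is complete, hence so is $\oc$. I would also note, as a sanity check, that this is consistent with the intermediate results in Proposition \ref{prop:3.1}: there $s\hato_\iota s'$ is shown to be a complete Schwartz space via \cite[Thm.\ 9-4, II, p.\ 47]{Groth_tensor}, which gives an alternative derivation of completeness that does not even need the explicit compact-regularity argument, so either citation suffices.

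The main obstacle — really the only non-bookkeeping point — is justifying the $\mathcal{LF}$-space isomorphism $\oc\cong\ilim{k}(s\hato(\ell^p)_{-k})$ without circularity, i.e.\ without secretly invoking the ultrabornologicity of $s'\hato s$ (which is what Proposition \ref{prop:3.1} establishes and which itself rests on completeness-type inputs). One must check that the termwise isomorphisms $(\D_{\Lp})_{-k}\cong s\hato(\ell^p)_{-k}$ are compatible with the connecting maps of the inductive spectra, so that they assemble into an isomorphism of the limits; this is where one uses that the spaces involved are Fréchet (so the $\iota$- and $\pi$-topologies agree, and completed tensor products behave well) and that the inductive limit is regular/compactly regular so that the limit topology is intrinsic. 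Once that compatibility is in place — and it is a routine consequence of nuclearity of $s$ and the diagonal action of the weights — the completeness of $\oc$ drops out of Proposition \ref{prop:compactly_regular}. I would therefore present the proof in three short steps: (1) recall $\oc=\ilim{k}(\D_{\Lp})_{-k}$ from \eqref{eq:oc_def}; (2) identify this $\mathcal{LF}$-space with $\ilim{k}(s\hato(\ell^p)_{-k})$ via the classical nuclear isomorphism $\D_{\Lp}\cong s\hato\ell^p$ carrying weights to weights, equivalently via Corollary \ref{cor:2.4}; (3) apply the completeness part of Proposition \ref{prop:compactly_regular}.
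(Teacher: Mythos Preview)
Your approach differs from the paper's, and the difference matters. The paper's proof is a two-line application of Proposition~\ref{prop:3.1}: ultrabornologicity of $\om$ implies completeness of its strong dual $\om'$, and the Fourier isomorphism $\Four\colon \om'\to\oc$ (\cite[Th.~XV, p.~268]{Schwartz}) transports completeness to $\oc$. The crucial point is that this route uses only Valdivia's isomorphism $\om\cong s'\hato s$, which is established independently of any bornologicity or completeness question; all the sequence-space machinery (Proposition~\ref{prop:compactly_regular}, Corollary~\ref{cor:2.4}) is spent on the $\om$ side, and $\oc$ is reached only at the very end via Fourier duality.

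Your route tries to identify $\oc$ directly with $\ilim{k}(s\hato(\ell^p)_{-k})$ and then quote Proposition~\ref{prop:compactly_regular}. The gap is precisely the step you flag as ``the only non-bookkeeping point'' and then dismiss as routine: the termwise isomorphisms $(\D_{\Lp})_{-k}\cong s\hato(\ell^p)_{-k}$ compatible with the connecting maps. Valdivia's isomorphism $\D_{\Lp}\cong s\hato\ell^p$ (\cite{Val13a}) is an \emph{abstract} isomorphism of Fr\'echet spaces; there is no reason it should intertwine multiplication by $(1+|x|^2)^k$ with the weight $j^k$ on the $\ell^p$ factor, so it does not automatically extend to a ladder of isomorphisms on the weighted steps. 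Corollary~\ref{cor:2.4} does not help here either: it identifies $s\hato_\iota s'$ with the sequence-space limit, but says nothing about $\oc$. What you are effectively trying to reprove is Bargetz' isomorphism $\oc\cong s\hato_\iota s'$---and the paper explicitly declines to use that isomorphism because its known proof already presupposes the ultrabornologicity of $s'\hato s$. Absent a genuinely new argument for the weighted compatibility, your step~(2) is circular or unproved, and the proof does not close.
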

\begin{proof}
The ulrabornologicity of $\om$ implies the completeness of the space $\om'$, the space of very rapidly decaying destributions. Hence, also the isomorphic space $\oc$ is complete.
\end{proof}

The observations in the proof of Proposition \ref{prop:3.1} leads to the following generalization of Corollary \ref{cor:2.4} and, in particular, to a representation of the strong dual of the $\iota$-tensor product of an $\mathcal{LF}$-space with an $\mathcal{F}$-space.
\begin{Proposition}\label{Prop:new}
Let $E = \ilim{k} E_k$ be an $\mathcal{LF}$-space, $E_k$ and $F$ Fr\'echet spaces. If $\ilim{k} (E_k\hato_\pi F)$ is complete, then we obtain
\begin{itemize}
\item[(i)] 
\[
E\hato_\beta F = E\hato_\iota F = (\ilim{k} E_k)\hato_\iota F = \ilim{k}(E_k\hato_\pi F).
\]
\item[(ii)] If, furthermore, the space $E$ is complete and the space $F$ is nuclear,
then
\[
(E\hato_\iota F)'_b = E_b' \hato F_b'.
\]
\item[(iii)] Finally, if also $E$ and $F$ are nuclear and $E$ is a complete $\mathcal{DF}$-space, then $E'_b\hato F_b'$ is ultrabornological.
\end{itemize}
\end{Proposition}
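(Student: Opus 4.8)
The plan is to prove the three assertions in sequence, each building on the previous one, and to rely throughout on the classical permanence theorems of Grothendieck together with the regularity hypothesis. For \textbf{(i)}, the starting point is that $\ilim{k}(E_k \hato_\pi F)$ is assumed complete, hence in particular a (complete) inductive limit of the Fr\'echet spaces $E_k \hato_\pi F$. First I would invoke the same mechanism used in the proof of Corollary \ref{cor:2.4} (via \cite[I, Prop.\ 14, p.\ 76]{Groth_tensor} or \cite[Rem.\ 22, p.\ 76]{Bargetz_thesis}), which says that for a compactly regular — equivalently here, complete — inductive limit the completed $\iota$-tensor product commutes with the inductive limit: $(\ilim{k} E_k)\hato_\iota F = \ilim{k}(E_k \hato_\iota F)$. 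Since the $E_k$ and $F$ are Fr\'echet spaces, $E_k \hato_\iota F = E_k \hato_\pi F$, giving the rightmost identity. The remaining equalities $E\hato_\beta F = E\hato_\iota F$ follow from the fact that on the product of an $\mathcal{LF}$-space (which is barrelled and bornological) with a Fr\'echet space, every separately continuous bilinear map is hypocontinuous, so the $\iota$- and $\beta$-topologies on $E\otimes F$ coincide; passing to completions preserves this.

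For \textbf{(ii)}, the idea is to dualize the identity from (i) and transport the dual through the tensor product, exactly as in parts (a) of the proof of Proposition \ref{prop:3.1}. Concretely, $E\hato_\iota F = \ilim{k}(E_k\hato_\pi F)$ is a (complete) inductive limit of Fr\'echet spaces, so by the standard duality of inductive and projective limits (\cite[p.\ 57]{Bierstedt}, 1.\ Prop.) its strong dual is $\plim{k}(E_k \hato_\pi F)'_b$. Now I would apply Grothendieck's theorem on strong duals of completed tensor products — Thm.\ 12 in \cite[II, p.\ 76]{Groth_tensor} — which requires one factor to be nuclear: since $F$ is nuclear, $(E_k \hato_\pi F)'_b \cong (E_k)'_b \hato F'_b$. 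Then the projective limit is interchangeable with the completed $\pi$-tensor product, as in \cite[p.\ 390]{Jarchow}, yielding $\plim{k}((E_k)'_b \hato F'_b) \cong (\plim{k}(E_k)'_b)\hato F'_b \cong E'_b \hato F'_b$, where the last step uses the dual description $E'_b = \plim{k}(E_k)'_b$ valid because $E$ is assumed complete (so the inductive limit is regular and the dual is the reduced projective limit of the step duals).

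For \textbf{(iii)}, having established $(E\hato_\iota F)'_b \cong E'_b \hato F'_b$, I would show that $E\hato_\iota F$ is a complete Schwartz space and then invoke L.\ Schwartz' theorem on the ultrabornologicity of strong duals of complete Schwartz spaces \cite[I, p.\ 43]{Schwartz_vector}, which immediately gives that $E'_b\hato F'_b$ is ultrabornological — this mirrors step (b) of the proof of Proposition \ref{prop:3.1}. To see that $E\hato_\iota F$ is a complete Schwartz space: completeness is the hypothesis of (i); being a Schwartz space is a permanence property — a complete $\mathcal{DF}$-space which is nuclear is a Schwartz space, Fr\'echet nuclear spaces are Schwartz spaces, and the completed tensor product of Schwartz spaces is a Schwartz space (Grothendieck, \cite[II, Thm.\ 9-4, p.\ 47]{Groth_tensor}); the inductive limit inherits this. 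The main obstacle I anticipate is bookkeeping: one must check that the nuclearity and $\mathcal{DF}$/Fr\'echet hypotheses are exactly the ones needed to invoke each permanence theorem (in particular that $E'_b$ is Fr\'echet nuclear so that the tensor product $E'_b\hato F'_b$ is unambiguous and Schwartz' theorem applies with the right topology), and that the various forms of "regularity" — compact regularity, completeness, the reduced projective limit property of the dual — are equivalent under the standing hypotheses so that they may be used interchangeably. No single step is deep given the cited results; the care lies in lining up the hypotheses.
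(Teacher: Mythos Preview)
Your proposal is correct and follows essentially the same route as the paper: (i) via \cite[I, Prop.\ 14, p.\ 76]{Groth_tensor} (or \cite[Rem.\ 22, p.\ 76]{Bargetz_thesis}), (ii) via \cite[1.\ Prop., p.\ 57]{Bierstedt} combined with Grothendieck's duality theorem \cite[II, Th.\ 12, p.\ 76]{Groth_tensor} and interchange of projective limit with the tensor product, and (iii) via nuclearity of $E\hato_\iota F$ (\cite[II, Th.\ 9-4, p.\ 47]{Groth_tensor}) together with Schwartz' ultrabornologicity theorem \cite[I, p.\ 43]{Schwartz_vector}. One small correction: your parenthetical ``compactly regular --- equivalently here, complete'' is inaccurate, since compact regularity implies completeness but not conversely (the paper notes this explicitly after Proposition \ref{prop:limit_interchange}); however, only completeness is hypothesized and used, so this does not affect the argument.
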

\begin{proof}
(i) is a consequence of \cite[Rem.\ 22, p.\ 76]{Bargetz_thesis} or \cite[I, Prop. 14, p.\ 76]{Groth_tensor}.

(ii) By 1.\ Proposition in \cite[p.\ 57]{Bierstedt} and Grothendieck's duality theorem \cite[II, Th.\ 12, p.\ 76]{Groth_tensor} we obtain that
\begin{align*}
(E\hato_\iota F)'_b = (\ilim{k}(E_k\hato F)))_b' = \plim{k}(E_k\hato F)_b' = \plim{k}((E_k)'_b \hato F_b') = E_b' \hato F_b'.
\end{align*}

(iii) $E\hato_\iota F$ is nuclear by \cite[II, Th.\ 9-4, p.\ 47]{Groth_tensor} and hence is a complete Schwartz space. Its dual $E_b'\hato F_b'$ is then ultrabornological due to \cite[I, p.\ 43]{Schwartz_vector}.
\end{proof}
Corollary \ref{cor:2.4} then corresponds to the special case $E_k=(\ell^2)_{-k}$, $F=s$, $E=s'$ in Proposition \ref{Prop:new}.

\section{Strong duals of quasinormable Fr\'echet spaces}\label{sec:7}
In \cite[Prop.\ 2.2, p.\ 1696]{Bargetz-Nigsch-Ortner} the compact regularity of the space $\dlp' = \ilim{m} (1-\Delta_n)^m L^p$ was proved by direct methods. A second proof relying on the sequence space representation $\dlp' \cong s'\hato \ell^p$ was given in \cite[Prop.\ 4.3(b), p.\ 1700]{Bargetz-Nigsch-Ortner}.

The following result provides a simple criterion for establishing the compact regularity of $\mathcal{LB}$-spaces generated by duals of Fr\'echet spaces.
\begin{Proposition}\label{prop:strong_dual}
The strong dual of a quasinormable Fr\'echet space is a complete and compactly regular $\mathcal{LB}$-space.
\end{Proposition}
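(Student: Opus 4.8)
The plan is to prove the statement in two parts: first that the strong dual $E'_b$ of a quasinormable Fréchet space $E$ is a compactly regular $\mathcal{LB}$-space, and second that it is complete. Since $E$ is Fréchet, fix a decreasing basis $(U_k)_{k\in\N}$ of absolutely convex $0$-neighbourhoods, and let $E'_b = \ilim{k} E'_{U_k}$, where $E'_{U_k}$ denotes the Banach space generated by the polar $U_k^\circ$ (so the steps are the local Banach spaces of the equicontinuous sets $U_k^\circ$). This is the standard $\mathcal{LB}$-representation of the strong dual of a Fréchet space. The natural tool for compact regularity is the criterion via property (Q) recalled in Definition \ref{def:property_Q}, together with the equivalence (4)$\Leftrightarrow$(6) of \cite[Thm.\ 2.7, p.\ 252]{Wengenroth14} used already in the proof of Proposition \ref{prop:compactly_regular}; completeness then follows automatically from \cite[Cor.\ 2.8, p.\ 252]{Wengenroth14}.

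The heart of the argument is to read off property (Q) directly from quasinormability. By definition, $E$ is quasinormable exactly when for every $k$ there is an $l>k$ such that the trace of the strong topology of $E'_b$ on the equicontinuous set $U_k^\circ$ coincides with the topology of uniform convergence on a suitable $0$-neighbourhood of $E$; one may take that neighbourhood to be some $U_m$ with $m>l$. Translating this into the language of the steps $E'_{U_k}$: the equicontinuous set $U_k^\circ$ is a bounded (indeed a $0$-neighbourhood-ish) set in the Banach step $E'_{U_k}$, and the strong topology restricted to it is, on the one hand, no finer than the topology any later step $E'_{U_m}$ induces, and on the other hand, by quasinormability, already as coarse as the topology of uniform convergence on $U_m$, which is precisely the norm topology of $E'_{U_m}$ restricted to $U_k^\circ$. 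Hence $E'_{U_l}$ and $E'_{U_m}$ induce the same topology on (a $0$-neighbourhood contained in) $U_k^\circ$ for all $m>l$, which is exactly condition (Q). I would be careful here to check that the "suitable $0$-neighbourhood" furnished by quasinormability can be taken from the fixed basis $(U_m)$ and with index $>l$ — this is routine since the $U_m$ are cofinal and the trace topologies only get finer — and to verify that $U_k^\circ$ (or a multiple of it) does contain a $0$-neighbourhood of the step $E'_{U_k}$, which holds because $U_k^\circ$ is by construction the closed unit ball of $E'_{U_k}$.

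For completeness, once compact regularity is established, \cite[Cor.\ 2.8, p.\ 252]{Wengenroth14} gives it immediately; alternatively one can invoke that a compactly regular $\mathcal{LF}$-space is complete as recorded in the proof of Proposition \ref{prop:compactly_regular}. (One could also note that the strong dual of any Fréchet space is complete for the bornological/quasi-complete-style reasons, but routing everything through the Wengenroth criterion keeps the argument uniform with Section \ref{sec:4}.) The main obstacle I anticipate is purely a matter of bookkeeping in the second paragraph: matching the "neighbourhood of $0$ in $E$" appearing in the definition of quasinormability with the index structure demanded by property (Q), i.e.\ producing the single index $l$ that works simultaneously for all $m>l$, rather than an $l$ depending on $m$. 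This is handled by the monotonicity of the trace topologies along the chain $E'_{U_l}\hookrightarrow E'_{U_m}$: once the trace coincides with uniform convergence on some $U_{m_0}$, it coincides for all $m\ge m_0$ as well, so it suffices to absorb $m_0$ into the choice of $l$ (taking $l$ larger if necessary, or re-indexing). With that in hand the equivalence with (Q) is immediate and the proposition follows.
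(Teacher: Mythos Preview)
Your proposal is correct and follows essentially the same route as the paper: represent $E'_b$ as the $\mathcal{LB}$-space $\ilim{k}\langle U_k^\circ\rangle$, verify property (Q) from quasinormability, and then invoke \cite[Thm.\ 2.7, Cor.\ 2.8]{Wengenroth14} for compact regularity and completeness. The only difference is that the paper routes the implication ``quasinormable $\Rightarrow$ (Q)'' through the seminorm characterization \cite[Lem.\ 26.14]{Meise-Vogt} (the inequality $q_l\le \eps q_k + C(\eps)q_m$), whereas you unpack it directly from the topological definition via the sandwich $\beta\le\tau_m\le\tau_{m_0}=\beta$ on $U_k^\circ$; this is the same content, and your final paragraph correctly identifies that one simply takes $l:=m_0$. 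Your exposition in the middle paragraph conflates the roles of $l$ and $m$ a bit (the ``$l>k$'' in your restatement of quasinormability is spurious at that stage), but the mathematics is sound.
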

\begin{proof}
Let $\{U_k \mid k\in \N\}$ be a basis of closed, absolutely convex neighborhoods of $0$ with $U_k\supset U_{k+1}$ of the Fr\'echet space $F$. Quasinormability of $F$ implies (\cite[p.\ 99]{Bierstedt}) that the strong dual $F_b'$ of $F$ coincides with the $\mathcal{LB}$-space $\ilim{k}F_k'$, where $F_k':= \langle U_k^\circ \rangle$ is the Banach space generated by the polar $U_k^\circ$ of $U_k$. 

Denote by $q_k$ the Minkowski functional of $U_k^\circ$. Then the quasinormability of $F$ implies that for each $k\in \N$ there exists some $l>k$ such that for each $m>l$ and each $\eps>0$ there exists $C(\eps)>0$ such that the following (Q)-inequality is valid (\cite[26.14 Lem., p.\ 298]{Meise-Vogt}):
\begin{equation}\label{eq:Q}
q_l(f') \le \eps q_k(f') + C(\eps) q_m(f'), \qquad f'\in F_k' \hookrightarrow F_l' \hookrightarrow F_m'
\end{equation}
This inequality immediately implies that $F_b' = \ilim{k} F_k'$ satisfies property (Q) (see Definition \ref{def:property_Q}), hence
\cite[Th.\ 2.7, p.\ 252]{Wengenroth14} implies its compact regularity and \cite[Cor.\ 2.8, p.\ 252]{Wengenroth14} its completeness.
\end{proof}
\begin{remark}
As a referee remarked, Proposition \ref{prop:strong_dual} can also be found as Thm.\ (4)
in \cite[p.\ 301,302]{Bonet}. 
\end{remark}

\begin{Examples} \rm
The $\mathcal{LB}$-spaces
\[
s' = \ilim{k} (\ell^2)_{-k},
\ \mathcal{S}' = \lim_{\substack{k \rightarrow \\ m\rightarrow}} (1-\Delta_n)^m(\mathcal{C}_0)_{-k}, \ 
\mathcal{E}'(\Omega), \ \dlp' = \ilim{m} (1-\Delta_n)^m L^p
\]
($\Omega$ open in $\R^n$, $1\le p\le \infty$) are compactly regular on account of the quasinormability 
of the spaces $s, \ \mathcal{S},\ \mathcal{E}(\Omega), \ \mathcal{D}_{L^q} \cong s\hato \ell^q$, $1\le q <\infty$,
$\dot{\mathcal{B}}\cong s\hato c_0$ (use Grothendieck's permanence Proposition 13 in \cite[II,p.\ 76]{Groth_tensor}).

If $\mathcal{H}(\Omega)$ denotes the space of holomorphic functions on $\Omega$, then its strong dual $\mathcal{H}(\Omega)_b'$ is a 
compactly regular $\mathcal{LB}$-space due to the nuclearity and, hence, the quasinormability of the Fr\'echet space $\mathcal{H}(\Omega)$.
More generally, if $P(\partial)$ is a hypoelliptic differential operator with constant coefficients then $\mathrm{ker} P(\partial)$ (with the topologies induced by $\D'(\Omega)$ or $\mathcal{E}(\Omega)$)
is nuclear, hence quasinormable. Consequently, $(\mathrm{ker}P(\partial))_b'$ is compactly regular. $\mathcal{H}(\Omega)_b'$ then corresponds
to the special case $P(\partial) = \partial_1 + i\partial_2$, $\mathrm{ker}P(\partial) = \mathcal{H}(\Omega)$. 
\end{Examples}
Finally, we give a criterion for the validity of interchanging limits in completed tensor products:
\begin{Proposition}\label{prop:limit_interchange}
Let $\ilim{k}E_k$ be an $\mathcal{LF}$-space and let $F$ be a 
Fr\'echet space. If the inductive limit $\ilim{k} (E_k\hato_\iota F)$ is compactly regular, then
\[
\ilim{k} (E_k\hato_\iota F) = (\ilim{k}E_k) \hato_\iota F.
\]
\end{Proposition}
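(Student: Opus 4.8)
The plan is to identify the natural continuous map and then to upgrade it to an isomorphism by exploiting compact regularity on both sides. Concretely, for each $k$ the canonical inclusion $E_k \hookrightarrow \ilim{k}E_k$ induces a continuous map $E_k \hato_\iota F \to (\ilim{k}E_k)\hato_\iota F$, and these are compatible with the bonding maps, so by the universal property of the inductive limit topology we obtain a continuous linear bijection
\[
\Phi \colon \ilim{k}(E_k\hato_\iota F) \longrightarrow (\ilim{k}E_k)\hato_\iota F .
\]
(Bijectivity on the level of the uncompleted tensor products is immediate since $\bigl(\ilim{k}E_k\bigr)\otimes F = \ilim{k}(E_k\otimes F)$ algebraically, and the $\iota$-topology on the right-hand side restricts to the $\iota$-topology coming from each step; one then has to argue that passing to completions does not destroy bijectivity, which is where the hypotheses enter.) So the content of the statement is that $\Phi$ is \emph{open}, equivalently that its inverse is continuous.

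First I would reduce the openness of $\Phi$ to a statement about equicontinuous sets in the dual, or alternatively work directly with a $0$-neighborhood basis. A $0$-neighborhood basis of $(\ilim{k}E_k)\hato_\iota F$ is given by (closures of) absolutely convex sets $\Gamma\bigl(\bigcup_k (U_k\otimes V)\bigr)$ where $U_k$ ranges over a basis in $E_k$ and $V$ over a basis in $F$; a $0$-neighborhood basis of $\ilim{k}(E_k\hato_\iota F)$ is given by absolutely convex sets meeting each $E_k\hato_\iota F$ in a $0$-neighborhood, i.e.\ sets of the form $\Gamma\bigl(\bigcup_k W_k\bigr)$ with $W_k$ a $0$-neighborhood in $E_k\hato_\iota F$. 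The subtlety is that a $0$-neighborhood in $E_k\hato_\iota F$ need not be of the simple product form $U_k\otimes V$; it can mix the steps. This is exactly the classical obstruction to interchanging $\ilim{}$ with $\hato_\iota$, and it is precisely what compact regularity is designed to circumvent.

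The key step, and the main obstacle, is therefore to use the hypothesis that $\ilim{k}(E_k\hato_\iota F)$ is compactly regular to show that the Grothendieck factorization theorem applies, yielding that every continuous seminorm on $\ilim{k}(E_k\hato_\iota F)$ is dominated by one coming from the $\iota$-topology of $(\ilim{k}E_k)\hato_\iota F$. In practice I expect this to be carried out exactly as in the proof of Corollary \ref{cor:2.4} and Proposition \ref{Prop:new}(i): one invokes \cite[Rem.\ 22, p.\ 76]{Bargetz_thesis} (equivalently \cite[I, Prop.\ 14, p.\ 76]{Groth_tensor}), whose hypothesis is that the relevant $\mathcal{LF}$-inductive limit of tensor products be complete — and here \emph{compact regularity of $\ilim{k}(E_k\hato_\iota F)$ implies its completeness} (by the same mechanism as in Proposition \ref{prop:compactly_regular}, via \cite[Cor.\ 2.8, p.\ 252]{Wengenroth14}), so the hypothesis of that result is met. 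Thus the proof is essentially: (1) compact regularity $\Rightarrow$ completeness of $\ilim{k}(E_k\hato_\iota F)$; (2) Grothendieck's / Bargetz's interchange result then gives $\ilim{k}(E_k\hato_\iota F) = (\ilim{k}E_k)\hato_\iota F$. The one thing to check carefully is that the cited interchange theorems really only need \emph{completeness} of the inductive limit and not a priori compact regularity; if they need slightly more, one supplies it from compact regularity, which is the stronger of the two standard regularity conditions. I would expect the final written proof to be just one or two sentences: \emph{``By \cite[Cor.\ 2.8, p.\ 252]{Wengenroth14}, compact regularity of $\ilim{k}(E_k\hato_\iota F)$ implies its completeness, so \cite[Rem.\ 22, p.\ 76]{Bargetz_thesis} (or \cite[I, Prop.\ 14, p.\ 76]{Groth_tensor}) applies and yields the claim.''}
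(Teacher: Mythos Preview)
Your proposal is correct and lands on exactly the paper's argument: compact regularity of $\ilim{k}(E_k\hato_\iota F)$ gives completeness via \cite[Cor.\ 2.8, p.\ 252]{Wengenroth14}, and then \cite[Rem.\ 22, p.\ 76]{Bargetz_thesis} or \cite[I, Prop.\ 14, p.\ 76]{Groth_tensor} (which indeed only require completeness) yields the interchange. The exploratory discussion about $\Phi$ and $0$-neighborhood bases is unnecessary for the written proof, which is precisely your final two-sentence version.
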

\begin{proof}
\cite[Rem.\ 22, p.\ 76]{Bargetz_thesis} or \cite[I, Prop.\, 14, p.\ 76]{Groth_tensor} yields for Hausdorff locally convex spaces $E_k$ and $F$:
\[
\ilim{k} (E_k\hato_\iota F) = (\ilim{k} E_k) \hato_\iota F
\]
if the limit $\ilim{k} (E_k\hato_\iota F)$ is complete. For Fr\'echet spaces $E_k$ and $F$ the compact regularity of
$\ilim{k} (E_k\hato_\iota F)$ implies its completeness by \cite[Cor.\ 2.8, p.\ 252]{Wengenroth14}.
\end{proof}
Note that Proposition \ref{prop:limit_interchange} does not imply point (a) in the proof of Proposition \ref{prop:3.1}: while compact regularity implies completeness, the converse is not true in general.
\begin{Examples} \
\begin{itemize}
\item[(a)] The $\mathcal{LB}$-space $\ilim{k}((\ell^\infty)_{-k}\hato c_0)$ is compactly regular due to the (Q)-inequality 
\begin{equation}\label{eq:exQ}
j^{-l}|x_{ij}| \le \eps j^{-k}|x_{ij}| + C(\eps) j^{-m} |x_{ij}|, \quad k<l<m,
\end{equation}
(for any $\eps>0$) if $(x_{ij})_{i,j} \in (\ell^\infty)_{-k}\hato_\pi c_0 \hookrightarrow (\ell)_{-l}\hato_\pi c_0 \hookrightarrow (\ell)_{-m}\hato_\pi c_0$, and due to \cite[6.4 Th., p.\ 112]{Wengenroth-derived}. 
To see \eqref{eq:exQ}, it suffices to note that for any $R>0$ and $k<l<m$ we have
\begin{align*}
j^{-l} &= Y(R-j)j^{-l} + Y(j-R)j^{-l} = Y(R-j)j^{-m}j^{m-l} + Y(j-R)j^{-k}j^{k-l}\\
&\le R^{m-l}j^{-m} + R^{k-l}j^{-k}
\end{align*}
Hence by Proposition \ref{prop:limit_interchange} the space
\[
\ilim{k}((\ell^\infty)_{-k}\hato_\pi c_0) \cong \ilim{k}((\ell^\infty)_{-k})\hato_\pi c_0 \cong s' \hato c_0 \cong
\dot{\mathcal{B}}'
\]
is compactly regular.

Note that the space $\dot{\mathcal{B}}'$ is also a $\mathcal{DF}$-space, being an inductive limit of Banach spaces (\cite[p.\ 64]{Groth}).
\item[(b)] Similarly, the compact regularity of the $\mathcal{LB}$-spaces $(L^p)_{-\infty}, (\mathcal{M}^1)_{-\infty}$ (\cite[p.\ 241]{Schwartz}), $\oc^\circ, \oc^m$ (\cite[p.\ 173]{Horv}) is an easy consequence of a simple (Q)-inequality, compare \cite[Prop.\ 5.2, p.\ 932]{Bargetz-Nigsch-Ortner2}, with respect to a projective limit description of these spaces. 
\end{itemize}
\end{Examples}

\medskip\noindent
{\bf Acknowledgements.} We are greatly indebted to one referee for pointing out a mistake in an earlier version of the paper and for several helpful suggestions.

\end{document}